\theoremstyle{plain}
\newtheorem{THEOREM}{Theorem}[section]
\newtheorem{theorem}[THEOREM]{Theorem}
\newtheorem{lemma}[THEOREM]{Lemma}
\theoremstyle{definition}
\theoremstyle{remark}
\newtheorem{remark}[THEOREM]{Remark}
\def \a {\alpha}
\def \g {\gamma}
\def \d {\delta}
\def \e {\varepsilon}
\def \f {\varphi}
\def \k {\kappa}
\def \l {\lambda}
\def \n {\nabla}
\def \s {\sigma}
\def \D {\Delta}
\def \L {\Lambda}
\def \O {\Omega}
\def \cD {\mathcal{D}}
\def \cH {\mathcal{H}}
\def \cI {\mathcal{I}}
\newcommand{\R}{\ensuremath{\mathbb{R}}}   
\newcommand{\I}{\ensuremath{\mathbb{I}}}   
\def \lan {\langle}
\def \ran {\rangle}
\def \llan {\left \langle}
\def \rran {\right \rangle}
\def \p {\partial}
\def \ra {\rightarrow}
\def \ss {\subset}
\def \HI {H\"older inequality}
\renewcommand{\ge}{\geqslant}
\renewcommand{\leq}{\leqslant}
\renewcommand{\le}{\leqslant}
\def \dk  {\, \mbox{d}\kappa}
\def \drho  {\, \mbox{d}\rho}
\def \dx  {\, \mbox{d}x}
\def \dmu  {\, \mbox{d}\mu}
\def \dv  {\, \mbox{d} v}
\def \ddt  {\frac{\mbox{d\,\,}}{\mbox{d}t}}
\def \dd  {\mbox{d}}
\def \dB {\dot{B}}
\def \dom {{\O \times \R^n}}
\def \uave {[u]_{\rho}}
\def \bh {\bar{h}}
\def \one {{\mathds{1}}}
\newcommand{\aro}[1]{[#1]_{\rho}}
\begin{document}

\title[Exponential relaxation to equilibrium for a kinetic Fokker-Planck-Alignment equation]{Exponential relaxation to equilibrium for a kinetic Fokker-Planck-Alignment equation with force}

\author{Vinh Nguyen}

\address{ Department of Mathematics, Statistics and Computer Science, University of Illinois at Chicago, 851 S Morgan St, Chicago, IL 60607}

\email{vnguye66@uic.edu, vinhtnguyen.math@gmail.com}

\subjclass{35Q84, 35Q92, 92D25}


\keywords{Collective behavior, Fokker-Planck equation, Hypocoercivity, Rayleigh friction}

\thanks{\textbf{Acknowledgment.} The author would like to thank Professor Roman Shvydkoy for useful discussions and acknowledge partial support from NSF grant DMS-2107956 (PI: Roman Shvydkoy).  
	}

\begin{abstract}
In this note, we consider a kinetic Fokker-Planck-Alignment equation with Rayleigh-type friction and self-propulsion force which is derived from general environmental averaging models. We show the exponential relaxation in time toward equilibrium of the solutions provided certain spectral gap conditions are satisfied. The result is proved by using Desvillettes-Villani's method for collisional models to establish the global hypocoercivity.
\end{abstract}

\maketitle

\section{Introduction}
In this note, we are interested in a kinetic Fokker-Plank-Alignment equation which is derived from general environmental averaging models. More specifically, let $\O\ss\R^n$ be a periodic domain. An agent is  featured by its position $x\in \O$ and its velocity $v\in \R^n$. The density of agents who has position $x$ and velocity $v$ at time $t \ge 0$, denoted by $f = f(x,v,t)$, is governed by the following equation:
\begin{equation}\label{e:f}
\p_t f + v\cdot \n_x f = s_\rho\big[ \D_v f + \n_v\cdot \big(\big[(v - [u]_{\rho}) +  F(v)\big] f\big)\big],  
\end{equation}
subject to the initial condition
\begin{equation*}
    f(x,v,0) = f_0(x,v).
\end{equation*}
Here $\rho$ and $u$ are macroscopic density and macroscopic velocity defined by
\begin{equation}
    \rho (x) =\int_{\R^n} f(x,v) \dv, \quad u\rho(x) = \int_{\R^n} v f(x,v) \dv.
\end{equation}
The family of pairs $(\k_\rho, \aro{\cdot})$ with $ \dk_\rho := s_\rho \drho$ satisfies the conditions for a material environmental averaging model introduced in \cite{RS2023env}.
The Rayleigh-type friction and self-propulsion force $F$ is given by
\begin{equation}\label{force}
F(v) = \dfrac{\s(|v|^p - 1)v}{\eta(|v|)},
\end{equation}
where $\eta: \R_+ \ra \R_+$ is a smooth, positive and increasing function satisfying
\begin{equation}\label{eta}
   \eta(z) = 1 \text{ if } z \le R \text{ for some } R > 0;\text{ and }  \eta(z) \sim z^q \text{ for some } q > p \text{ as } z \ra \infty.
\end{equation}
Our goal is to show that the solution of \eqref{e:f} relaxes exponentially fast toward its equilibrium. We utilize the Desvillettes-Villani's method (see \cite{DesVil05, villani2009hypo}) for collisional models to modify the entropy and establish a global hypocoercivity. Without additional force, Shvydkoy gave the first result on global hypocoercivity for this type of model in \cite{RS2022hypo}. In that paper, the averaging operator is given by
\[
\aro{u} := \phi *\big(\frac{\phi*(u\rho)}{\phi *\rho}\big),
\]
where $\phi$ is a radial non-negative non-increasing function satisfying 
 \begin{equation*}
     \int_{\O} \phi(x)\dx = 1, \quad \phi (x) \ge c_0 \one _{\{|x| < r_0\}}.
 \end{equation*}
Then the result was extended to a class of kinetic equations in \cite{RS2023env}. In this work, we show that if an extra force is added then we still have a global hypocoercivity and hence, an exponential relaxation to equilibrium provided that the force is small in the sense of assumption (iv) below.

\medskip
Before stating our result, let us give some motivation for studying the equation \eqref{e:f}. The study of collective behavior has attracted a lot of attention from the scientific community because it has diverse applications ranging from biology, physics, computer science, social science etc., see e.g. \cite{CamEtAl2003,  Sbook, VZ2012, yates2009a} and the references therein.

For microscopic descriptions, many models of collective behavior can be described as follows:
\begin{equation}\label{abs-form}
    \begin{cases}
        \dot{x}_i = v_i, \quad\quad (x_i, v_i) \in \O\times \R^n,\\
        \dot{v}_i = s_i ([v]_i - v_i)+ F_i, \quad i = 1, \ldots, N,
    \end{cases}
\end{equation}
where  $s_i, F_i$ are respectively the communication strength and the force corresponding to the $i$-th agent; $v = (v_1, \ldots, v_N)\R^{nN}$ and $[v]_i$ denotes the averaging operator acts on the $i$-th agent. The celebrated Cucker-Smale system \cite{CS2007a, CS2007b} can be written in form \eqref{abs-form} with
\begin{equation}\label{CS}
s_i = \sum_{j=1}^N m_j \phi(|x_i-x_j|), \qquad  [v]_i = \dfrac{\sum_{j=1}^N m_j \phi(|x_i-x_j|) v_j}{\sum_{j=1}^N m_j \phi(|x_i-x_j|)},
\end{equation}
where $\phi$ is a smooth radial non-increasing function, $m_i$ is the communication weight of the $i-$th agent. In this model $F_i = 0$. For examples with nontrivial force $F_i$, the readers can see \cite{LRS-friction, ShuTa2019, Sbook}. If we take $F_i$ in \eqref{abs-form} to be the combination of a deterministic force and a noise of the form
\begin{equation}\label{force-agent}
F_i = \dfrac{\s(1-|v_i|^p )v_i}{\eta(|v_i|)} + \sqrt{2 s_i(x)}\dB_i, \quad 0 < \s < 1 \text{ and }  p>0,
\end{equation}
here $\eta$ is given by \eqref{eta} and $B_i's$ are independent Brownian motions in $\R^n$, then the stochastic mean-field limit of \eqref{abs-form} formally leads to the kinetic equation \eqref{e:f}.

\medskip
In this short note, we will merely focus on the long-time behavior of the solution of \eqref{e:f} provided it exists. For a rigorous derivation of \eqref{e:f} via stochastic mean-field limit one can consult the scheme from \cite{BCC2011,  RS2023env}. For the existence of solution, we refer to \cite{BCC2011, KarperEtAl2013, RS2023env}. We assume the solution $f$ to \eqref{e:f} belongs to some weighted Sobolev space
\begin{equation*}
    H^k_l(\O\times \R^n):= \left\{f : \sum_{k'\le k}\sum_{|\a| = k'} \int_{\O\times\R^n} \llan v\rran^{l+ 2(k-k')}|\p^\a_{x,v} f|^2 \dx\dv < \infty\right\},
\end{equation*}
where $\llan v\rran = \sqrt{1+ |v|^2}$ and $\a$ denotes a multiindex.

Next let us introduce some more notations. Letting $G: \R_+ \ra \R$ be the function defined by
\[
G(z) := \int_0^z \dfrac{\s (y^{p+1} - y)}{\eta(y)} \mbox{ d}y,
\]
and letting
\begin{equation}\label{V}
    V(v) = \dfrac{|v|^2}{2} + G(|v|).
\end{equation}
Then the gradient and Hessian matrix of $V$ can be computed explicitly,
\begin{align}
    \n V &= v +  F(v),\label{gradV}\\
    \n^2 V & =\left(1+ \frac{\s(|v|^p - 1)}{\eta(|v|)} \right)\I + \frac{\s |v|^{p}}{\eta(|v|)} \frac{v}{|v|}\otimes \frac{v}{|v|} -\frac{\s (|v|^p -1) |v|\eta'(|v|)}{\eta^2(|v|)} \frac{v}{|v|}\otimes \frac{v}{|v|}, \label{HessV}
\end{align}
where $\I$ is the identity matrix.
\begin{remark}
By the assumption \eqref{eta} and the identity \eqref{HessV} we see that the Hessian matrix of $V$ is bounded. Thus, there exists a positive constant $\L$ such that 
\begin{equation}\label{HessV-bdd}
    |(\n^2 V) (y)| \le \L |y|, \quad \forall y \in \R^n.
\end{equation}
We also note that for $y \in \R^n$,
\begin{equation}\label{HessV-coer}
  y^T (\n^2 V) y \ge \left(1 - \frac{\s}{\eta(|v|)} -\frac{\s |v|^{p+1}\eta'(|v|)}{\eta^2(|v|)} \right)|y|^2 \ge \l |y|^2,
\end{equation}
where $\l >0$ is a constant depending on $\s$.
\end{remark}  
We expect that the solution to \eqref{e:f} converges to 
\begin{equation}\label{limit}
    f_{\infty} :=  \dfrac{1}{Z} e^{-V(v)}\quad \text{ with } Z = \int_{\dom} e^{-V(v)} \dv\dx.
\end{equation}
The macroscopic field $ u_F$ is  defined by
\begin{equation*}
    \rho u_F(x) = \int_{\R^n} F(v) f(x,v) \dv.
\end{equation*}
Denote $L^2 (\k_\rho) := L^2(\dd \k_\rho)$. The inner product in $L^2(\k_\rho)$ is denoted by $\llan \cdot, \cdot\rran_{\k_{\rho}}$. Our main result is the following:
\begin{theorem}\label{thm-main}
Suppose that $f \in H^k_l(\dom)$ is a solution to \eqref{e:f} such that $\rho (t)$ satisfies the following assumptions for all $t\ge 0$:
\begin{enumerate} 
    \item[(i)]  $c_0\le s_\rho\leq c_1$ and $\|\n s_\rho\|_{\infty} \leq c_2$, where $ c_0, c_1, c_2$ are positive constants, 
    \item[(ii)] $\n_x (s_\rho [\cdot]_\rho): L^2(\rho)\ra L^2(\rho)$ is uniformly bounded,
    \item[(iii)] 
    there exists a constant $0<\e_0<1$ such that 
    \[
    \sup \big\{\llan w,[w]_{\rho}\rran_{\k_\rho} | \; w \in L^2(\k_\rho), \|w\|_{L^2(\k_\rho)} = 1\big\}    \le 1-\e_0,
    \]
    \item[(iv)] there exists a constant $0< \e_1 < 1$ such that
    \[
    \|u_F\|_{L^2(\k_\rho)} \le\e_1\|u\|_{L^2(\k_\rho)}.
    \]    
\end{enumerate}
Then $f$ converges to $f_{\infty}$ exponentially fast:
\begin{equation*}
    \|f(t) - f_{\infty}\|_{L^1(\dom)} \le C e^{-\d t},
\end{equation*}
where $C>0$ is a constant depending on initial data $f_0$ and given parameters; $\d>0$ is a constant depending only on given parameters. 
\end{theorem}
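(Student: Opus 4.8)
### Proof proposal

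The plan is to follow the Desvillettes--Villani entropy method adapted to this kinetic Fokker--Planck--Alignment setting, as was done by Shvydkoy in the force-free case. The natural Lyapunov functional is the relative entropy (or, working in $L^2$, the relative energy) of $f$ with respect to the local Maxwellian-type profile built from $V$, together with correction terms coupling spatial and velocity derivatives. Concretely, I would introduce the relative free energy
\begin{equation*}
    \mathcal{H}(f \mid f_\infty) = \int_{\dom} f \log \frac{f}{f_\infty} \dv\dx,
\end{equation*}
and compute its dissipation along \eqref{e:f}. The friction-propulsion force enters precisely through $\nabla V = v + F(v)$ by \eqref{gradV}, so the Fokker--Planck part $\Delta_v f + \nabla_v\cdot((v-[u]_\rho + F)f)$ can be rewritten as $\nabla_v\cdot(f_\infty^{V}\nabla_v(f/f_\infty^{V})) - \nabla_v\cdot([u]_\rho f)$ where $f_\infty^V \propto e^{-V(v)}$; this is the structural identity that makes the coercivity estimate \eqref{HessV-coer} usable.

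The first main step is the purely dissipative (local) estimate: the entropy production produces a good term controlling $\int f_\infty^V |\nabla_v \sqrt{f/f_\infty^V}|^2$ via a logarithmic Sobolev inequality for the measure $e^{-V}\dv$ (available because $\nabla^2 V \ge \lambda \I$ by \eqref{HessV-coer} — this is where assumption on $\sigma$ being small matters), plus an \emph{alignment} defect term of the form $-\langle u, [u]_\rho\rangle_{\kappa_\rho}$-type contributions, plus a \emph{force} defect term measured by $u_F$. Assumption (iii) gives the spectral gap $\langle w, [w]_\rho\rangle_{\kappa_\rho}\le (1-\e_0)\|w\|^2$ that turns the alignment term into something coercive on $u - \langle v\rangle$, and assumption (iv), $\|u_F\|_{L^2(\kappa_\rho)}\le \e_1\|u\|_{L^2(\kappa_\rho)}$ with $\e_1<1$, lets the force defect be absorbed into the alignment gain provided $\e_1$ is small relative to $\e_0$ — so the force is a genuine perturbation and the signs still work out. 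This step is essentially algebraic once the right splitting is in place.

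The second and harder step is hypocoercivity proper: the velocity-gradient dissipation alone does not control $\nabla_x f$, so following Desvillettes--Villani I would augment the functional with a mixed term $\int a(t)\,\nabla_x f\cdot\nabla_v f\,(\cdots)$ (or the corresponding second-order corrections on $\|\nabla_x\sqrt{f}\|^2$, $\|\nabla_v\nabla_x\sqrt f\|^2$, etc., in the chosen $H^k_l$ norm). Differentiating this mixed term uses the transport commutator $[\partial_t + v\cdot\nabla_x, \nabla_v] = \nabla_x$, which transfers $x$-regularity from $v$-regularity at the price of lower-order terms; those lower-order terms are controlled exactly by assumptions (i) and (ii) (boundedness of $s_\rho$, $\nabla s_\rho$, and of $\nabla_x(s_\rho[\cdot]_\rho)$ on $L^2(\rho)$). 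Choosing the coupling coefficients small and time-dependent (or constant but small, as in Villani's cleaner version) yields a Grönwall inequality $\ddt \mathcal{F}(t) \le -\delta \mathcal{F}(t)$ for a modified functional $\mathcal{F}$ equivalent to the squared $H^k_l$ distance, giving exponential decay of $\mathcal{F}$, hence of $\|f(t)-f_\infty\|_{H^k_l}$.

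The main obstacle I anticipate is twofold and both parts concern the force term $F$. First, $F$ is not globally Lipschitz-bounded — it grows like $\sigma(|v|^p-1)/\eta(|v|)$ and, although \eqref{eta} forces this to decay at infinity so that $\nabla^2 V$ is bounded, the commutator and interpolation estimates in the hypocoercive step must be carried out in the weighted space $H^k_l$ with weight $\langle v\rangle$, and one has to check that all the extra terms generated by $F$ (and by $\nabla F = \nabla^2 G$) are dominated, using \eqref{HessV-bdd}, by the good dissipation terms; this is where the weight exponents $l$ and the constant $\Lambda$ enter and must be tracked. Second, closing the sign condition requires that the force defect governed by $\e_1$ in (iv) not destroy the alignment spectral gap $\e_0$ in (iii); I expect the theorem to implicitly need $\e_1$ sufficiently small (or a smallness condition on $\sigma$) so that, after Young's inequality, the net coefficient in front of $\|u - \langle v\rangle_\rho\|^2$ stays strictly positive. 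Once that bookkeeping is done, the final step — translating exponential decay in $H^k_l$ to exponential decay in $L^1(\dom)$ — is routine via Cauchy--Schwarz against the Gaussian-type weight, possibly with a Csiszár--Kullback-type inequality if one prefers to start from the entropy rather than the $L^2$ formulation.
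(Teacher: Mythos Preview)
Your proposal is correct and follows essentially the same approach as the paper: relative entropy plus Fisher-information correctors in the Desvillettes--Villani style, with the log-Sobolev inequality coming from \eqref{HessV-coer}, assumptions (iii)--(iv) used to close the sign on the macroscopic $u$-terms, and Csisz\'ar--Kullback at the end. The only minor sharpening is that the paper commits to the entropy formulation throughout (the Lyapunov functional is $\tilde{\cI}+\gamma\cH$ with $\tilde{\cI}=\cI_{vv}+\e\cI_{xv}+c\,\cI_{xx}$, not an $H^k_l$ norm), and your suspicion that an implicit smallness of $\e_1$ relative to $\e_0$ is needed is exactly right---it is hidden in the constant $c_3$ in \eqref{gap}.
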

\begin{remark}
    Observe that in the case of Cucker-Smale model, since $s_\rho = \phi *\rho$ and $s_\rho\aro{u} = \phi * (u\rho)$, condition (ii) holds automatically and condition (i) holds if $\phi*\rho \ge \underline{\rho}$ for some $\underline{\rho} > 0$. 
\end{remark}
\section{Proof of main result}
In this section, we will prove Theorem \ref{thm-main}. Firstly, let us introduce some notations and definitions.
\subsection{Notations and preliminaries}
The relative entropy is defined by
\begin{equation*}
    \cH(f|f_{\infty}) = \int_\dom  f \log \frac{f}{f_{\infty}} \dv \dx.
\end{equation*}
For our convenient computation, we will derive an equation for $h$ satisfying $f = h f_{\infty}$. Plugging this $f$ into equation \eqref{e:f}, we have the following equation for $h$:
\begin{equation}\label{e:h}
    \p_t h + v\cdot \n_x h = s_\rho \big( \D_vh -\n V \cdot \n_v h + h [u]_\rho \cdot \n V - [u]_\rho \cdot \n_v h \big).
\end{equation}
Letting
\[
A =  \n_v, \qquad B = v \cdot \n_x, 
\]
and $A^*$ be the adjoint of $A$ with respect to the inner product in the weighted space $L^2(\mu)$:
\[
\lan \f_1, \f_2 \ran = \int_\dom \f_1 \f_2 \dmu, \quad \dmu = f_{\infty} \dv \dx.
\]
We can calculate $A^*$ explicitly,
\[
\quad A^* = ( \n V -  \n_v) \cdot.
\]
Then we can write \eqref{e:h} in the abstract form:
\begin{equation}\label{e:FPAR}
 h_t = -s_\rho A^* A h - Bh  + s_\rho A^*(h\uave ).
\end{equation}
Following the notations from the paper \cite{RS2023env}, let us define the partial Fisher information functionals as follows:
\begin{equation*}
    \cI_{vv} (h) = \int_\dom \dfrac{|\n_v h|^2}{h}\dmu, \quad \cI_{xv}(h) = \int_\dom \dfrac{\n_x h \cdot \n_v h }{h}\dmu, \quad \cI_{xx} (h) = \int_\dom \dfrac{|\n_x h|^2}{h}\dmu.
\end{equation*}
The full Fisher information is defined by
\[
\cI = \cI_{vv} +\cI_{xx}.
\]
For our convenience we use the notation
\begin{equation*}
    (\f)_\mu := \int_\dom \f \dmu.
\end{equation*}
Denote $  \bh = \log h$ and 
\[
 \cD_{vv} = (s_\rho h|\n^2_v\bh|^2)_\mu, \quad \cD_{xv} = (s_\rho h |\n_v\n_x \bh|^2)_\mu,
\]
where $\n_v^2 \bh$  is the Hessian matrix with respect to $v$ of $\bh$. We will use the notations $J_A, J_B, J_u$ to refer to the terms related to the operators $A, B$ and related to $u$ respectively. They are different in the proof of each lemma in the sequel. We denote by $C, c$ positive constants which may vary from line to line.
\subsection{Proof of Theorem \ref{thm-main}}
By the Csisz\'ar-Kullback inequality,
\begin{equation}\label{est:CK}
    \|f-f_{\infty}\|^2_{L^1(\dom)} \le c\cH.
\end{equation}
Therefore, it suffices to show that the entropy function $\cH$ decays exponentially fast in time.
Using \eqref{e:f} and integration by parts, we have
\begin{equation}\label{Hder1}
    \ddt \cH = -\int_{\dom} s_\rho \dfrac{|\n_v f + \n V f|^2}{f} \dv \dx + \llan u_V,\uave\rran_{\k_\rho},
\end{equation}
where
\begin{equation}\label{uV}
u_V = u + u_F.
\end{equation}
Define the partial Fisher information functional $\cI_{vv}$ by
\[
\cI_{vv} = \int_{\dom} s_\rho \dfrac{|\n_v f + \n V f|^2}{f} \dv \dx.
\]
By the assumption (i) we have
\begin{equation}\label{H-aux1}
\ddt \cH \le -c_0 \cI_{vv} + \llan u_V,\uave\rran_{\k_\rho}.
\end{equation}
We can also rewrite \eqref{Hder1} in the dissipative form:
\begin{equation}\label{Hder2}
    \ddt \cH = -\int_{\dom} s_\rho \dfrac{|\n_v f + (\n V- u_V) f|^2}{f} \dv \dx - \|u_V\|^2_{L^2(\k_\rho)}+ \llan u_V,\uave\rran_{\k_\rho}.
\end{equation}
By the triangle inequality and assumption (iv) we have
\begin{equation}
    \|u\|_{L^2(\k_\rho)}  \le \dfrac{1}{1-\e_1}\|u_V\|_{L^2(\k_\rho)}.
\end{equation}
Then by the Cauchy-Schwarz inequality, assumptions (iii) and (iv) we have 
\begin{align}\label{gap}
   \llan u_V,\uave\rran_{\k_\rho} =& \llan u_V,[u_V]_{\rho}\rran_{\k_\rho} - \llan u_V,[u_F]_{\rho}\rran_{\k_\rho}\notag\\
   \le& (1-\e_0)\|u_V\|^2_{L^2(\k_\rho)} + \dfrac{\e_1}{1-\e_1}\|u_V\|^2_{L^2(\k_\rho)} \notag\\
   \le & (1 - c_3) \|u_V\|^2_{L^2(\k_\rho)},
\end{align}
where $c_3>0$ depending on $\e_0, \e_1$. Plugging this inequality into \eqref{Hder2}
 we obtain
\begin{equation}\label{H-aux2}
    \ddt \cH \le - c_3 \|u_V\|^2_{L^2(\k_\rho)}.
\end{equation}
Combining \eqref{H-aux1}, \eqref{H-aux2} and \eqref{gap} we have
\begin{equation}\label{Hdt}
    \ddt \cH \le -\dfrac{c_0c_3}{1+c_3} \cI_{vv} - \dfrac{c^2_3}{1+c_3} \|u_V\|^2_{L^2(\k_\rho)} \le -c \cI_{vv} - c \|u_V\|^2_{L^2(\k_\rho)},
\end{equation}
where $c>0$ depending on $\e_0, \e_1, c_0$.

By \eqref{HessV-coer}, 
$f_{\infty}$ satisfies a logarithmic Sobolev inequality, see \cite{villani2009hypo}. Thus, we have
\begin{equation}\label{est:log-S}
    \cH \le  c\cI.
\end{equation}
We have the following three estimates on the time derivative of partial Fisher information functionals. Their proofs will be presented in the next subsection.
\begin{lemma}\label{Ivv-deri}
We have
\begin{equation}
    \ddt \cI_{vv}(h) \le - 2 \cD_{vv} -\l c_0 \cI_{vv} -2\cI_{xv} + c\|u\|^2_{L^2(\k_\rho)},
\end{equation}
where $c$ is a positive constant depending on $c_0,c_1, \l, \L$.
\end{lemma}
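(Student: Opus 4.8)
The plan is to differentiate $\cI_{vv}(h)=\int_\dom |\n_v h|^2/h\,\dmu$ in time along the evolution \eqref{e:FPAR} and to split the result according to the three operators on the right-hand side, writing $\ddt\cI_{vv}=J_A+J_B+J_u$ for the contributions of $-s_\rho A^*Ah$, $-Bh$ and $s_\rho A^*(h\uave)$ respectively. Using $\ddt\cI_{vv}=\int_\dom\!\big(\tfrac{2\n_v h\cdot\n_v \dot h}{h}-\tfrac{|\n_v h|^2}{h^2}\dot h\big)\dmu$ with $\dot h$ taken from each term in turn, each $J_\bullet$ is then manipulated by integration by parts in $v$ (for $J_A,J_u$) or in $x$ (for $J_B$). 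All these integrations by parts are legitimate since $f\in H^k_l(\dom)$ and $f_\infty$ has Gaussian-type tails, which I take for granted. Recall $\n_v h=h\n_v\bh$ and $\n_v f_\infty=-\n V f_\infty$.

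For $J_A$ this is the classical Desvillettes--Villani / Bakry--\'Emery $\Gamma_2$ computation (see \cite{villani2009hypo}); because the multiplier $s_\rho=s_\rho(x)$ is $v$-independent it commutes with $\n_v$ and simply rides along through every integration by parts, so one arrives at $J_A=-2(s_\rho h|\n_v^2\bh|^2)_\mu-2\int_\dom s_\rho h\,(\n_v\bh)^T(\n^2 V)(\n_v\bh)\,\dmu=-2\cD_{vv}-2\int_\dom s_\rho h\,(\n_v\bh)^T(\n^2 V)(\n_v\bh)\,\dmu$, the curvature term being produced by the commutator $[\n_v,A^*A]=(\n^2 V)\n_v$. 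By the coercivity \eqref{HessV-coer} and $s_\rho\ge c_0$, this last integral is $\ge 2\l c_0\cI_{vv}$, hence $J_A\le -2\cD_{vv}-2\l c_0\cI_{vv}$. For $J_B$ the only mechanism is the commutator $[\n_v,B]=\n_x$: writing $\n_v(Bh)=\n_x h+B\n_v h$ and using that $B$ is skew-adjoint on $L^2(\mu)$ (valid since $\O$ is periodic and $f_\infty$ is $x$-independent), the terms involving $B(\,\cdot\,)$ integrate to zero and one is left with exactly $J_B=-2\cI_{xv}$.

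The term $J_u$ is where the alignment and the force enter and is the main point. Using $A^*(h\uave)=h\,\uave\cdot\n V-\uave\cdot\n_v h$ and that $s_\rho,\uave$ are $v$-independent, I compute $\n_v$ of this quantity; then one integration by parts in $v$ (moving off the second $v$-derivative of $h$, using $\n_v f_\infty=-\n V f_\infty$) makes the cubic-in-$\n_v h$ terms and, crucially, the two terms carrying the unbounded factor $\uave\cdot\n V$ cancel pairwise, leaving the clean identity $J_u=2\int_\dom s_\rho h\,(\n_v\bh)^T(\n^2 V)\,\uave\,\dmu$. It remains to estimate this: by Cauchy--Schwarz, the boundedness $|\n^2 V|\le\L$ (the Remark after \eqref{HessV}), Young's inequality with a small parameter $\d$, and $c_0\le s_\rho\le c_1$, one gets $J_u\le \L c_1\d\,\cI_{vv}+\L\d^{-1}\int_\O s_\rho|\uave|^2\rho\,\dx=\L c_1\d\,\cI_{vv}+\L\d^{-1}\|[u]_\rho\|^2_{L^2(\k_\rho)}$. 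Since the averaging operator is a contraction on $L^2(\k_\rho)$ (assumption (iii) together with the standard properties of material environmental averaging models, cf. \cite{RS2023env}) we have $\|[u]_\rho\|_{L^2(\k_\rho)}\le\|u\|_{L^2(\k_\rho)}$. Choosing $\d$ so that $\L c_1\d=\l c_0$ and adding the three bounds yields $\ddt\cI_{vv}\le -2\cD_{vv}-2\l c_0\cI_{vv}-2\cI_{xv}+\l c_0\cI_{vv}+c\|u\|^2_{L^2(\k_\rho)}$ with $c=\L^2 c_1/(\l c_0)$, which depends only on $c_0,c_1,\l,\L$; this is the assertion.

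The main obstacle is the $J_u$ step: one must carry out the integration by parts carefully enough to exhibit the cancellations, so that no term carrying the unbounded weight $\n V=v+F$ survives, and then invoke exactly the right structural inputs — boundedness of $\n^2 V$ and $L^2(\k_\rho)$-contractivity of $[\,\cdot\,]_\rho$ — to land on a remainder of the form $c\|u\|^2_{L^2(\k_\rho)}$ with the stated dependence of $c$. The $J_A$ and $J_B$ computations are a direct transcription of the Desvillettes--Villani argument, the only new feature being the $x$-dependent multiplier $s_\rho$, which is harmless since all differentiations there are in $v$.
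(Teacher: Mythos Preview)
Your proposal is correct and follows essentially the same approach as the paper: the same decomposition $J_A+J_B+J_u$, the same Bakry--\'Emery/commutator identities yielding $J_A\le-2\cD_{vv}-2\l c_0\cI_{vv}$ and $J_B=-2\cI_{xv}$, and the same cancellation in $J_u$ leaving only the Hessian term $2(s_\rho h\,(\n_v\bh)^T(\n^2 V)\uave)_\mu$, which is then bounded exactly as you describe. The paper spells out the $J_u$ cancellations term by term (labeling them $J_u^1,\dots,J_u^4$ and showing $J_u^2+J_u^3+J_u^4=0$), but the substance is identical.
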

\begin{lemma} \label{I2-deri} 
We have 
    \begin{equation}
        \ddt \cI_{xv} \le c\cI_{vv} - \frac12 \cI_{xx} + 2\cD_{vv} + \cD_{xv} + c\|u\|^2_{L^2(\k_\rho)},
    \end{equation}
    where $c$ is dependent on $c_0, c_1, c_2, \l, \L$.
\end{lemma}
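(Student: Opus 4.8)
The plan is to differentiate $\cI_{xv}$ along \eqref{e:FPAR} and, writing everything in terms of $\bh=\log h$ (so that $\cI_{xv}=\int_\dom(\n_x\bh\cdot\n_v\bh)\,h\dmu$), to split the result into three pieces $J_A+J_B+J_u$ coming respectively from the Fokker-Planck part $-s_\rho A^*Ah$, the transport part $-Bh$, and the alignment/force part $s_\rho A^*(h\uave)$. The logarithmic variable makes the dissipation appear naturally as the $\cD$-functionals and renders the commutator structure transparent.

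First I would treat the transport term $J_B$. Since $f_{\infty}$ is $x$-independent and $\O$ is periodic, every pure transport contribution (a total $x$-derivative integrated against $\dmu$) vanishes, and the only surviving term stems from the commutator $[A,B]=[\n_v,v\cdot\n_x]=\n_x$: differentiating $\n_v\bh$ along $-v\cdot\n_x$ produces the extra term $-\n_x\bh$, and pairing it with $\n_x\bh$ gives exactly $-\cI_{xx}$. This is the good term; roughly half of it will later be spent absorbing $x$-derivative cross terms from $J_A$ and $J_u$, which is why the statement keeps only $-\frac12\cI_{xx}$.

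Next, the Fokker-Planck part $J_A$. Here I would follow the Desvillettes-Villani computation: expanding $A^*Ah=-\D_v h+\n V\cdot\n_v h$, differentiating in $x$, and integrating by parts in $v$ produces mixed Hessian terms of the form $\pm2\int_\dom s_\rho(\n_v^2\bh:\n_v\n_x\bh)\,h\dmu$, which by Cauchy-Schwarz and Young's inequality are bounded by $2\cD_{vv}+\cD_{xv}$; it also produces terms carrying $\n^2V$. Because $s_\rho$ is independent of $v$, the $v$-derivatives ignore it, but the $x$-derivatives do not, generating factors of $\n_x s_\rho$ controlled by $\|\n s_\rho\|_{\infty}\le c_2$ (hence the dependence of the constant on $c_2$). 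The $\n^2V$-terms are handled with \eqref{HessV-bdd} and \eqref{HessV-coer}: the linear growth $|\n^2V(v)|\le\L|v|$ forces us to carry a velocity weight, which is paid for by integrating by parts to convert $\int_\dom\llan v\rran^2|\n\bh|^2h\dmu$-type quantities into controllable combinations of $\cI_{vv}$, $\cI_{xx}$ and the $\cD$'s, with \eqref{HessV-coer} keeping the signs in check. The residual lower-order pieces are absorbed into $c\,\cI_{vv}+\frac12\cI_{xx}$.

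Finally, the alignment term $J_u$, coming from $s_\rho A^*(h\uave)$ with $A^*=(\n V-\n_v)\cdot$ acting in both slots of $\cI_{xv}$. Distributing the $x$-derivative and invoking assumption (i) ($c_0\le s_\rho\le c_1$, $\|\n s_\rho\|_{\infty}\le c_2$) together with assumption (ii) (uniform $L^2(\rho)\to L^2(\rho)$ boundedness of $\n_x(s_\rho\aro{\cdot})$), each resulting piece is a Fisher-information quantity times $\|u\|_{L^2(\k_\rho)}$, or is already of order $\|u\|^2_{L^2(\k_\rho)}$; Cauchy-Schwarz and Young's inequality then give $c\|u\|^2_{L^2(\k_\rho)}$ plus small multiples of $\cI_{vv}$ and $\cI_{xx}$, absorbed as above. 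I expect the main obstacle to be precisely this weight bookkeeping: one must arrange the integrations by parts so that each unbounded factor coming from $\n V$ and $\n^2V$ is compensated either by a $\cD$-functional or by the coercivity \eqref{HessV-coer}, while simultaneously ensuring that the negative $\cI_{xx}$ produced by the commutator is not entirely consumed, leaving the clean coefficient $-\frac12$.
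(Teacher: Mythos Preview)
Your decomposition $J_A+J_B+J_u$ and the identification $J_B=-\cI_{xx}$ from the commutator $[\n_v,v\cdot\n_x]=\n_x$ match the paper exactly. Two points, however, deserve correction.

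First, you misread \eqref{HessV-bdd}. The statement $|(\n^2V)y|\le\L|y|$ says the Hessian is a \emph{uniformly bounded} matrix (operator norm $\le\L$), not that it grows linearly in $|v|$. Consequently there is no velocity weight to ``pay for'': the $\n^2V$-term arising in $J_A$ is simply $-(s_\rho\,\n_x\bh\cdot(\n^2V)\n_vh)_\mu$, which Cauchy--Schwarz bounds by $c_1\L\sqrt{\cI_{vv}\cI_{xx}}$ directly. The coercivity \eqref{HessV-coer} is not used at all in this lemma, and the ``weight bookkeeping'' obstacle you anticipate does not exist. This only shortens your argument.

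Second, and more substantively, your treatment of $J_u$ is too optimistic. After distributing the $x$- and $v$-derivatives across $s_\rho A^*(\uave h)$ you obtain five pieces; three of them, of the type
\[
(s_\rho h\,\bh_{x_i}\,\uave\cdot\n_v\bh_{v_i})_\mu,\qquad (s_\rho h\,\bh_{v_i}\,\uave\cdot\n_v\bh_{x_i})_\mu,\qquad -(s_\rho h\,\uave\cdot\n_v(\n_v\bh\cdot\n_x\bh))_\mu,
\]
carry \emph{three} gradient factors and are \emph{not} individually of the form ``$\sqrt{\text{Fisher}}\times\|u\|_{L^2(\k_\rho)}$'' --- bounding any one of them would require $L^\infty$ control on $\uave$, which you do not have. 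The paper's proof works because these three terms cancel exactly (the first two sum to minus the third by the product rule), leaving only
\[
(h\,(s_\rho\uave)_{x_i}\cdot\n_v\bh_{v_i})_\mu\ \le\ c\|u\|_{L^2(\k_\rho)}\sqrt{\cD_{vv}}\quad\text{and}\quad (s_\rho\,\n_x\bh\cdot(\n^2V)(\uave h))_\mu\ \le\ c\|u\|_{L^2(\k_\rho)}\sqrt{\cI_{xx}},
\]
which do close via assumptions (i)--(ii) and the boundedness of $\n^2V$. You should make this cancellation explicit; without it the $J_u$ estimate does not go through.
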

\begin{lemma}\label{Ixx-deri}
We have 
\[
\ddt \cI_{xx}(h)  \leq   c \cI_{vv} - \cD_{xv} + c\|u\|^2_{L^2(\k_\rho)},
\]
where $c$ is a constant depending on $\l, \L$ and the parameters in the assumption (i), (ii).
\end{lemma}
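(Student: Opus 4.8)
The plan is to follow the Desvillettes--Villani scheme used in \cite{RS2023env}: differentiate $\cI_{xx}(h)=\int_\dom h\,|\n_x\bh|^2\dmu$ in time (recall $\tfrac{|\n_x h|^2}{h}=h|\n_x\bh|^2$ and $\bh=\log h$), substitute the abstract equation \eqref{e:FPAR}, and split the outcome into three groups: $J_A$ coming from $-s_\rho A^*Ah$, $J_B$ from $-Bh$, and $J_u$ from $s_\rho A^*(h\uave)$. All integrations by parts in $v$ will use the self-adjointness of $A^*A$ for $\dmu$, i.e.\ $\int_\dom(\D_v\f-\n V\cdot\n_v\f)\psi\dmu=-\int_\dom\n_v\f\cdot\n_v\psi\dmu$, and are justified by the decay built into $H^k_l(\dom)$.

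First, $J_B=0$: since $v$ is independent of $x$, $B=v\cdot\n_x$ commutes with $\n_x$, so the integrand reduces to the perfect $x$-divergence $-(v\cdot\n_x)\!\bigl(\tfrac{|\n_x h|^2}{h}\bigr)$, whose integral over the periodic domain $\O$ against the $x$-independent density $f_\infty$ vanishes.

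Next I treat $J_A$ by a Bakry--\'Emery type computation. Because $\n V=\n V(v)$ depends on $v$ only, differentiating the Fokker--Planck term in $x$ produces no Hessian-of-$V$ term, and after integration by parts in $v$ the part of $J_A$ not involving $\n_x s_\rho$ collapses exactly to $-2\cD_{xv}$. The only remaining contribution is the cross term produced when $\n_x$ falls on the $x$-dependent factor $s_\rho$, namely $-2\sum_i\int_\dom(\p_{x_i}s_\rho)\,h\,(\n_v\p_{x_i}\bh)\cdot\n_v\bh\dmu$; by Young's inequality together with $s_\rho\ge c_0$ and $\|\n s_\rho\|_{\infty}\le c_2$ (assumption (i)) this is bounded by $\tfrac12\cD_{xv}+c\,\cI_{vv}$. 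Hence $J_A\le-\tfrac32\cD_{xv}+c\,\cI_{vv}$.

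The main obstacle is $J_u$, where the force enters through $\n V=v+F(v)$, which by \eqref{HessV-bdd} is only linearly bounded. Writing $A^*(h\uave)=h\,\uave\cdot(\n V-\n_v\bh)$, the dangerous contributions are those pairing $\n V$ with $|\n_x\bh|^2$; I expect these to cancel exactly --- after differentiating in $x$, the term $h\,\uave\cdot(\n V-\n_v\bh)\,|\n_x\bh|^2$ is cancelled by the $v$-integration by parts of the accompanying term $-s_\rho\,h\,\uave\cdot\n_v|\n_x\bh|^2$. What survives involves only $\n_x\bh$ (not its square) or the mixed Hessian $\n_v\n_x\bh$; for these I would invoke the adjoint identity, rewriting $\uave\cdot(\n V-\n_v\bh)=h^{-1}A^*(h\uave)$ and moving $A^*$ back onto the remaining smooth $v$-dependent test factor, which eliminates $\n V$ and leaves terms of the form $\int_\dom h\,[(\n_v\n_x\bh)\n_x s_\rho]\cdot\uave\dmu$ and $\int_\dom h\,s_\rho\,(\n_x\uave):(\n_v\n_x\bh)\dmu$. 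By Young's inequality the mixed-Hessian factors get absorbed into $\tfrac12\cD_{xv}$, while the leftover factors are estimated through
\[
\int_\dom h\,|\uave|^2\dmu=\|\uave\|_{L^2(\rho)}^2,\qquad \int_\dom h\,|\n_x\uave|^2\dmu=\|\n_x\uave\|_{L^2(\rho)}^2
\]
(using $\int_{\R^n}h f_\infty\dv=\rho$), both of which are $\le c\,\|u\|_{L^2(\k_\rho)}^2$ by the $L^2$-contractivity of the averaging operator $\uave=[u]_\rho$, by assumption (ii) (rewriting $\n_x\uave=s_\rho^{-1}\n_x(s_\rho\uave)-s_\rho^{-1}(\n_x s_\rho)\uave$ and using $\|\n s_\rho\|_{\infty}\le c_2$), and by the equivalence of $\rho$ and $\k_\rho$ afforded by $c_0\le s_\rho\le c_1$. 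This gives $J_u\le\tfrac12\cD_{xv}+c\,\|u\|_{L^2(\k_\rho)}^2$, and adding $J_A$, $J_B$, $J_u$ yields the claimed inequality. (Alternatively, the residual $\n V$-terms can be estimated directly by Cauchy--Schwarz using $|\n V(v)|\lesssim\langle v\rangle$ from \eqref{HessV-bdd} together with uniform moment bounds on $h$ available in the $H^k_l$ setting, which is presumably where the dependence of $c$ on $\l,\L$ in the statement comes from.)
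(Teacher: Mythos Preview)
Your proposal is correct and follows essentially the same route as the paper: the same splitting into $J_A,J_B,J_u$, the same $J_B=0$, the same Bakry--\'Emery collapse of $J_A$ to $-2\cD_{xv}$ plus the $\n_x s_\rho$ cross term, and the same identification of $J_u$ with a pairing of $\n_x(s_\rho\uave)$ against $\n_v\n_x\bh$. Two minor remarks: (1) for $J_u$ the paper avoids ever writing out $\n V$ by first commuting $\p_{x_i}$ with $A^*$ (they commute since $A^*$ acts in $v$ only) and then passing $A^*$ to the test factor, which yields directly $J_u=2\bigl(h\,(s_\rho\uave)_{x_i}\cdot\n_v\bh_{x_i}\bigr)_\mu$ without the intermediate ``I expect these to cancel'' step --- this is cleaner than expanding $A^*(h\uave)=h\,\uave\cdot(\n V-\n_v\bh)$ first; (2) in the paper's computation no $\l,\L$ actually appear, so your parenthetical alternative invoking moment bounds is unnecessary (the dependence on $\l,\L$ in the statement seems to be listed only for uniformity with the other lemmas).
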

Choosing $\e>0$ small so that if we define
\begin{equation}
    \tilde{\cI}  = \cI_{vv} + \e \cI_{xv} + \frac{\l c_0}{c}\cI_{xx},
\end{equation}
then $\cI \sim \tilde{\cI}$. Combining three lemmas above and the assumption (iv) we have
\begin{equation}\label{Idt}
    \ddt \tilde{\cI} \le - \l c_0 \cI_{vv} - \frac{\e}{2} \cI_{xx} + C  \|u\|^2_{L^2(\k_\rho)} \le - \l c_0 \cI_{vv} - \frac{\e}{2} \cI_{xx} + C  \|u_V\|^2_{L^2(\k_\rho)}  .
\end{equation}
From \eqref{Hdt}, \eqref{Idt} and \eqref{est:log-S} we can choose a constant $\g$ such that 
\begin{align}
    \ddt (\tilde{\cI} + \g \cH) \lesssim - \cI \le -\d(\tilde{\cI} +\g \cH).
\end{align}
Thus, by Gr\"onwall's inequality we obtain
\begin{equation}
    \tilde{\cI} + \g \cH \le (\tilde{\cI}_0 + \g \cH_0) e^{-\d t} \le c \cI_0 e^{-\d t}.
\end{equation}
Then we can conclude the theorem.
\subsection{Proof of three technical lemmas}
In this subsection, we will give the proofs of three lemmas mentioned previously.
\begin{proof}[Proof of Lemma \ref{Ivv-deri}]

Let us rewrite $\cI_{vv}$ in the form 
\[
\cI_{vv} = ( \n_v h \cdot \n_v \bh )_\mu.
\]
By chain rule and equation \eqref{e:FPAR} we get
\begin{align*}
 \ddt \cI_{vv} =&\, 2 ( \n_v h_t \cdot \n_v \bh )_\mu - ( |\n_v \bh |^2 h_t )_\mu :=  J_A + J_B + J_u ,
 \end{align*}
 where
\begin{align*}
J_A & =   -2  ( s_\rho \n_v A^*A h \cdot  \n_v \bh )_\mu + (s_\rho  |\n_v \bh |^2 A^*A h )_\mu, \\
J_B &= -2 ( \n_v B h\cdot  \n_v \bh )_\mu + ( |\n_v \bh |^2 B h )_\mu ,\\
J_u & = 2(s_\rho \n_v A^*(\uave h)\cdot \n_v \bh)_\mu - (s_\rho  |\n_v \bh |^2 A^*(\uave h) )_\mu.
\end{align*}
For notational convenience we will use the Einstein summation convention in the sequel.

We firstly consider the term $J_A$.  Using the identity
\[
\p_{v_i} (A^*A h) = A^*Ah_{v_i} + \n V_{v_i} \cdot \n_v h,
\]
$J_A$ equals to
\begin{align*}
 -2  (s_\rho  A^*Ah_{v_i} \bh_{v_i} )_\mu - 2 (s_\rho (\n V_{v_i} \cdot \n_v h)  \bh_{v_i} )_\mu   +  (s_\rho |\n_v \bh |^2 A^*A h )_\mu =: J_A^1+ J_A^2+ J_A^3.
\end{align*}
By \eqref{HessV-coer} we have
\begin{equation*}
    J_A^2 = - 2(s_\rho h^{-1} (\n_v h)^T \n^2V \n_v h)_\mu \le - 2\l (s_\rho h^{-1}\n_v h \cdot \n_v h )_\mu .
\end{equation*}
Then the assumption (i) in Theorem \ref{thm-main} implies that
\[
 J_A^2 \le  - 2\l c_0 \cI_{vv}.
\]
By switching $A^*$ in $J_A^1, J_A^3$ we can write
\[
\begin{split}
  J_A^1 + J_A^3 =& -2  (s_\rho  Ah_{v_i} \cdot A \bh_{v_i} )_\mu + (s_\rho A (|\n_v \bh |^2)\cdot  A h  )_\mu \\
   =& -2 (s_\rho  h A\bh_{v_i}\cdot  A \bh_{v_i} )_\mu 
   -2 (s_\rho \bh_{v_i} Ah\cdot  A \bh_{v_i}  )_\mu
   + 2 (s_\rho  \bh_{v_i}A \bh_{v_i} \cdot  A h  )_\mu\\
   =& -2 (s_\rho  h A\bh_{v_i}\cdot  A \bh_{v_i} )_\mu = - 2\cD_{vv}.
\end{split}
\]
Combining the above estimates we obtain
\begin{equation}\label{est:JA}
    J_A \le - 2 \cD_{vv} -2 \l c_0 \cI_{vv}.
\end{equation}

For the term  $J_B$, plugging $B = v\cdot\n_x $ into $J_B$ we have
\[
J_B = -2 ( \n_x h \cdot  \n_v \bh )_\mu - 2 ( (v \cdot \n_x h_{v_i}) \bh_{v_i} )_\mu  + ( |\n_v \bh |^2 v \cdot \n_x h )_\mu.
\]
Using the identity $\bh_{v_i} = h_{v_i}h^{-1}$ and integration by parts, we get
\[
\begin{split}
    2 ( (v \cdot \n_x h_{v_i}) \bh_{v_i} )_\mu = \big( v \cdot \n_x |h_{v_i}|^2 h^{-1} \big)_\mu = \big( |h_{v_i}|^2 h^{-2}  v \cdot  \n_x h\big)_\mu =  ( |\n_v \bh |^2 v \cdot \n_x h )_\mu.
\end{split}
\]
Substituting this into $J_B$ we yield
\begin{equation}\label{est:JB}
    J_B = - 2 \cI_{xv}.
\end{equation}

For the last term $J_u$, we have
\[
\begin{split}
    J_{u}  =&\, 2(s_\rho \n_v A^*(\uave h)\cdot  \n_v \bh )_\mu -(s_\rho |\n_v \bh |^2 A^*(\uave h))_\mu \\
 =& \,2( s_\rho \n_v ( \n V \cdot \uave h - \uave\cdot \n_v h)\cdot  \n_v \bh )_\mu- ( s_\rho \n_v |\n_v \bh |^2 \cdot  \uave h )_\mu \\
 =&\, 2(s_\rho \n^2 V(\uave h)\cdot \n_v\bh)_\mu +2 ( s_\rho (\n V \cdot \uave )\n_v h \cdot  \n_v \bh )_\mu - 2(s_\rho \n^2_v h (\uave)  \cdot  \n_v \bh )_\mu \\
 &\,- 2( s_\rho \n_v^2 \bh(\n_v \bh) \cdot \uave h )_\mu\\
=&: J_u^1 + J_u^2+J_u^3+J_u^4.
\end{split}
\]
 Plugging  
 \[
 \bh_{v_iv_j} = h^{-1} h_{v_iv_j} - h^{-2}h_{v_i}h_{v_j}
 \]
 into $J_u^4$ we get
 \[
 \begin{split}
     J_u^4 = & - 2(s_\rho h^{-1}h_{v_iv_j}\bh_{v_j}[u_i]_\rho h)_\mu  + 2(s_\rho h^{-2}h_{v_i}h_{v_j}\bh_{v_j}[u_i]_\rho h)_\mu\\
     =&  - 2(s_\rho \n^2_v h (\uave)  \cdot  \n_v \bh )_\mu + 2(s_\rho |\n_v \bh|^2 \n_v h \cdot \uave)_\mu\\
     =&\, J_u^3 + 2(s_\rho |\n_v \bh|^2 \n_v h \cdot \uave)_\mu.
 \end{split}
 \]
 Therefore,
 \[
 \begin{split}
     J_u^2+ J_u^3 + J_u^4 =& \,2 ( s_\rho (\n V \cdot \uave h)\n_v \bh \cdot  \n_v \bh )_\mu - 2(s_\rho |\n_v \bh|^2 \n_v h \cdot \uave)_\mu + 2 J_u^4\\
     = & \,2 (s_\rho A^*(\uave h)|\n_v \bh|^2)_\mu + 2 J^4_u\\
     = &\, 2 (s_\rho h\uave  \cdot A(|\n_v \bh|^2))_\mu + 2J^4_u \\
     = &\, 4 (s_\rho h\uave  \cdot \n_v^2 \bh(\n_v \bh))_\mu + 2J^4_u = 0.
 \end{split}
 \]
 Thus, 
 \begin{align}
     J_u =&\,  2(s_\rho \n^2 V(\uave h)\cdot \n_v\bh)_\mu = 2(s_\rho \n^2 V(\uave)\cdot \n_v h)_\mu\notag\\
     \le &\, 2\L c_1  \|\uave\|_{L^2(\k_\rho)} \sqrt{\cI_{vv}} \qquad \text{(by (i), \eqref{HessV-bdd} and \HI)}\notag\\
     \le &\, c \|u\|_{L^2(\k_\rho)} \sqrt{\cI_{vv}}\notag\\
     \le &\, c \|u\|^2_{L^2(\k_\rho)} + \l c_0 \cI_{vv}  \quad (\text{ by Young's inequality)}.\label{est:Ju}
 \end{align}
 Here the last constant $c$ depends on $c_0, c_1, \l, \L$.

 \medskip
Combining \eqref{est:JA}, \eqref{est:JB} and \eqref{est:Ju} we have the conclusion of this lemma.
\end{proof}
\begin{proof}[Proof of Lemma \ref{I2-deri}]
    Computing the derivative of $\cI_{xv}$ with respect to $t$ we get
\[
 \ddt \cI_{xv}(h) = (\n_x h_t \cdot \n_v \bh )_\mu + ( \n_x \bh \cdot \n_v {h}_t )_\mu - ( h_t \n_v \bh\cdot \n_x \bh )_\mu =: J_A + J_B + J_u,
\]
where 
\begin{align*}
    J_A =&   - (\n_x (s_\rho A^*Ah) \cdot \n_v \bh )_\mu - (  \n_x \bh \cdot \n_v  (s_\rho A^*Ah) )_\mu + (s_\rho  A^*Ah \n_v \bh\cdot \n_x \bh )_\mu=: J_A^1 + J_A^2 + J_A^3, \\
    J_B =& - ( \n_x (v \cdot \n_x h) \cdot \n_v \bh )_\mu - ( \n_x \bh \cdot \n_v (v\cdot \n_x h)  )_\mu + ( (v \cdot \n_x h) \n_v \bh\cdot \n_x \bh  )_\mu: = J_B^1 + J_B^2 + J_B^3,\\
    J_u =& (\n_x (s_\rho A^*(\uave h))\cdot \n_v\bh)_\mu + (\n_x \bh \cdot \n_v(s_\rho A^*(\uave h)))_\mu - (s_\rho A^* (\uave h) \n_v\bh \cdot \n_x\bh)_\mu.
\end{align*}
Let us firstly estimate $J_A$. Switching $A^*$ and using the identity 
\[
\n_v h_{x_i} = \bh_{x_i}\n_v h + h \n_v \bh_{x_i}
\]
we have
\[
\begin{split}
J_A^1 & = - ( s_\rho A^* A h_{x_i} \bh_{v_i} )_\mu - ( (s_\rho)_{x_i} A^* A h \bh_{v_i} )_\mu = - ( s_\rho \n_v h_{x_i} \cdot \n_v  \bh_{v_i} )_\mu - ( (s_\rho)_{x_i} \n_v h \cdot\n_v \bh_{v_i} )_\mu \\
& = -  ( s_\rho h \n_v \bh_{x_i} \cdot \n_v  \bh_{v_i} )_\mu- ( s_\rho  \bh_{x_i} \n_v h \cdot \n_v  \bh_{v_i} )_\mu - \left( \frac{(s_\rho)_{x_i}}{s_\rho^{1/2}} \frac{\n_v h}{h^{1/2}} \cdot  s_\rho^{1/2} h^{1/2} \n_v \bh_{v_i} \right)_\mu .
\end{split}
\]
In view of assumption (i) in Theorem \ref{thm-main}, 
\[
J_A^1 \leq  -  (s_\rho  h \n_v \bh_{x_i} \cdot \n_v  \bh_{v_i} )_\mu- (s_\rho  \bh_{x_i} \n_v h \cdot \n_v  \bh_{v_i} )_\mu + c\sqrt{\cI_{vv} \cD_{vv}},
\]
where $c>0$ is a constant depending on $c_0, c_2$.
 Next let us consider $J_A^2$. Since 
 \[
 \p_{v_i} (A^*A h) = A^*Ah_{v_i} + \n V_{v_i} \cdot \n_v h \text{ and } \n_v h_{v_i} = h\n_v \bh_{v_i} +\bh_{v_i}\n_v h,
 \]
 we have
\[
\begin{split}
J_A^2 =&  - ( s_\rho \bh_{x_i} A^*A {h}_{v_i} )_\mu - (s_\rho \bh_{x_i} \n V_{v_i} \cdot \n_v h)_\mu\\
=& - ( s_\rho \n_v\bh_{x_i} \cdot \n_v {h}_{v_i} )_\mu - (s_\rho \bh_{x_i} \n V_{v_i} \cdot \n_v h)_\mu\\
=& -(s_\rho h \n_v \bh_{x_i}\cdot \n_v \bh_{v_i})_\mu - (s_\rho\bh_{v_i} \n_v \bh_{x_i}\cdot \n_v h)_{\mu} - (s_\rho \n_x\bh \cdot (\n^2 V) (\n_v h))_{\mu}.
\end{split}
\] 
Then
\[
\begin{split}
J_A^1+J_A^2 &\le  - (s_\rho \n_x\bh \cdot (\n^2 V) (\n_v h))_{\mu} -  2(s_\rho h \n_v \bh_{x_i} \cdot \n_v  \bh_{v_i} )_\mu - (s_\rho  Ah \cdot A(\n_v \bh\cdot \n_x \bh ) )_\mu + c \sqrt{\cI_{vv} \cD_{vv}} \\
& \le - (s_\rho \n_x\bh \cdot (\n^2 V) (\n_v h))_{\mu} + 2\sqrt{ \cD_{vv} \cD_{xv}} + c \sqrt{\cI_{vv} \cD_{vv}} - J_A^3\\
& \le c_1 \L \sqrt{\cI_{vv} \cI_{xx}}+ 2\sqrt{ \cD_{vv} \cD_{xv}} + c \sqrt{\cI_{vv} \cD_{vv}} - J_A^3.
\end{split}
\]
Thus, combining all the terms of $J_A$ and applying Young's inequality we yield
\begin{align}\label{est:JAxv}
J_A & \le c_1 \L \sqrt{\cI_{vv} \cI_{xx}}+ 2\sqrt{ \cD_{vv} \cD_{xv}} + c\sqrt{\cI_{vv} \cD_{vv}},\notag\\
& \le c \cI_{vv} + \frac{1}{4}\cI_{xx} + \frac32\cD_{vv} + \cD_{xv}.
\end{align}
Now we consider $J_B$. We have
\begin{align*}
J_B^2 =&  - ( \n_x \bh \cdot \n_x h )_\mu - ( \bh_{x_i} v_j h_{x_j v_i})_\mu\\
=& - \cI_{xx} + ( \bh_{x_i x_j} v_j  h_{v_i} )_\mu\\
=& -  \cI_{xx} +( {h}_{x_i x_j} v_j \bh_{v_i} )_\mu - ( \bh_{x_i} \bh_{x_j} v_j h_{v_i} )_\mu 
= -  \cI_{xx} - J_B^1 - J_B^3.
 \end{align*}
In the last row we used the identity 
\[
\bh_{x_i x_j}= h^{-1} {h}_{x_i x_j} - \bar{ h}_{x_i} \bh_{x_j}.
\]
It follows that
\begin{equation}\label{est:JBxv}
J_B = - \cI_{xx} .
\end{equation}
Lastly let us examine $J_u$. We have
\[
\begin{split}
J_u = &\, (  (s_\rho)_{x_i} A^*(\uave h)  \bh_{v_i})_\mu + ( s_\rho A^*((\uave)_{x_i} h) \bh_{v_i})_\mu+ ( s_\rho A^*(\uave h_{x_i})  \bh_{v_i})_\mu \\
& + (s_\rho  \bh_{x_i}   A^*(\uave h_{v_i} ) )_\mu +  ( s_\rho \n_x \bh \cdot (\n^2 V)(\uave h))_\mu  - (  s_\rho h \uave  \cdot \n_v( \n_v \bh\cdot \n_x \bh) )_\mu \\
=& \,( h (s_\rho \uave)_{x_i}  \cdot \n_v \bh_{v_i})_\mu + ( s_\rho h \uave \bh_{x_i} \cdot \n_v \bh_{v_i})_\mu  + (s_\rho h \n_v \bh_{x_i} \cdot   \uave \bh_{v_i}  )_\mu\\
&  +  ( s_\rho \n_x \bh \cdot (\n^2 V)(\uave h))_\mu - (s_\rho h \uave  \cdot \n_v( \n_v \bh\cdot \n_x \bh))_\mu\\
=:&\, J_u^1 +J_u^2+ J_u^3+ J_u^4+ J_u^5.
\end{split}
\]
Since 
\[
J_u^2 + J_u^3  = (s_\rho h \uave \cdot \n_v(\n_x\bh \cdot \n_v\bh))_{\mu} = -J_u^5,
\]
we get
\[
J_u = J_u^1 + J_u^4.
\]
By the assumption (ii) in Theorem \ref{thm-main},
\[
J_u^1   = ( h (s_\rho \uave)_{x_i}  \cdot \n_v \bh_{v_i})_\mu  \leq c \| u \|_{L^2(\k_\rho)} \sqrt{\cD_{vv}}.
\]
For $J^4_u$ we use the assumption (i) and \eqref{HessV-bdd} to get
\[
\begin{split}
J^4_u &= ( s_\rho \n_x \bh \cdot (\n^2 V)(\uave h))_\mu \\
& \le c \|u\|_{L^2(\k_\rho)} \sqrt{\cI_{xx}},
\end{split}
\]
where $c$ is a constant depending on $c_1, \L$. Hence, by Young's inequality we obtain
\begin{equation}\label{est:Juxv}
    J_u \le \frac14 \cI_{xx} + \frac12\cD_{vv} + c\|u\|^2_{L^2(\k_\rho)}. 
\end{equation}
Combining three estimates \eqref{est:JAxv}, \eqref{est:JBxv} and \eqref{est:Juxv} it implies the conclusion of this lemma.
\end{proof}
 \begin{proof}[Proof of Lemma \ref{Ixx-deri}]
 Computing the derivative of $\cI_{xx}(h)$ with respect to $t$ we get
 \[
\ddt \cI_{xx}(h) =  2( \n_x h_t \cdot \n_x \bh )_\mu - ( |\n_x \bh |^2 h_t )_\mu =: J_A+ J_B + J_u,
\]
where
\begin{align*}
    J_A = &  -2 ( \n_x (s_\rho A^*A h) \cdot \n_x \bh )_\mu + (s_\rho  |\n_x \bh |^2 A^*Ah )_\mu,\\
    J_B  = & -2 ( \n_x(v \cdot \n_x h) \cdot \n_x \bh )_\mu + ( |\n_x \bh |^2 v \cdot \n_x h )_\mu, \\
    J_u = & 2( \n_x (s_\rho A^*(\uave h) ) \cdot \n_x \bh )_\mu - (s_\rho  |\n_x \bh |^2 A^*(\uave h) )_\mu .
\end{align*}
For $J_A$ we have
\[
\begin{split}
 J_A 
& =-2 ( (s_\rho)_{x_i} A h \cdot A \bh_{x_i} )_\mu -2 (s_\rho  A h_{x_i} \cdot A \bh_{x_i} )_\mu+ (s_\rho A |\n_x \bh |^2 \cdot Ah)_\mu =: J_A^1 + J_A^2+ J_A^3.
\end{split}
\]
By the assumption (i) in Theorem \ref{thm-main},
\[
J_A^1 = -2 \Big(\dfrac{ (s_\rho)_{x_i}}{s_\rho^{1/2}} \dfrac{\n_v h}{h^{1/2}} \cdot s_\rho^{1/2}h^{1/2} \n_v \bh_{x_i} \Big)_\mu\le c \sqrt{ \cI_{vv} \cD_{xv}}.
\]
Using the identity $\n_v h_{x_i} = h\n_v \bh_{x_i} + \bh_{x_i} \n_v h$, we have
\[
J_A^2 = -2 (s_\rho h \n_v \bh_{x_i} \cdot \n_v \bh_{x_i} )_\mu -2  (s_\rho  \bh_{x_i} \n_v h \cdot \n_v \bh_{x_i} )_\mu =  -2 \cD_{xv} - J_A^3.
\]
Therefore, 
\begin{equation}
  J_A \leq c  \sqrt{\cI_{vv}\cD_{xv} } -2  \cD_{xv}.  
\end{equation}
We have $J_B = 0$ because
\[
\begin{split}
 -2 ( \n_x(v \cdot \n_x h) \cdot \n_x \bh )_\mu 
 = -2 ( (v \cdot \n_x h_{x_i}) h_{x_i} h^{-1} )_\mu 
 = - ( (v \cdot \n_x |\n_x h|^2 h^{-1} )_\mu 
& =  - ( |\n_x \bh|^2 v \cdot \n_x h   )_\mu .
\end{split}
\]
For $J_u$, we have
\[
\begin{split}
    J_u = &\,  2(h (s_\rho\uave)_{x_i}   \cdot \n_v \bh_{x_i} )_\mu  +  2(s_\rho   h  \bh_{x_i} \uave \cdot \n_v \bh_{x_i} )_\mu - (s_\rho \n_v (|\n_x \bh |^2 )\cdot\uave h)_\mu\\
    = &\, 2(h (s_\rho\uave)_{x_i}   \cdot \n_v \bh_{x_i} )_\mu \\
    \le & \, c \|u\|_{L^2(\k_\rho)} \sqrt{\cD_{xv}} \qquad \text{  (by the assumption (ii) in Theorem \ref{thm-main})}.
\end{split}
\]
Combining all the estimates for $J_A, J_B$ and $J_u$ we get
\begin{equation*}
\ddt \cI_{xx}(h)  \leq    c  \sqrt{\cD_{xv} \cI_{vv}} -2 \cD_{xv}+ c \|u\|_{L^2(\k_\rho)} \sqrt{\cD_{xv}}.
\end{equation*}
Then by Young's inequality, the lemma is derived.
\end{proof}

\end{document}